\newtheorem{theorem}{Theorem}[section]
\newtheorem{lemma}[theorem]{Lemma}
\newtheorem{corollary}[theorem]{Corollary}
\newtheorem{fact}[theorem]{Fact}
\theoremstyle{definition}
\newtheorem{conjecture}[theorem]{Conjecture}
\theoremstyle{remark}
\newtheorem{remark}[theorem]{Remark}
\numberwithin{equation}{section}
\renewcommand{\dim}{\mathop{\mathrm{dim}}}
\newcommand{\dist}{\mathrm{dist}}
\newcommand{\R}{\mathbb{R}}
\newcommand{\N}{\mathbb{N}}
\newcommand{\X}{\mathrm{X}}
\newcommand{\E}{\mathbb{E}}
\renewcommand{\P}{\mathbb{P}}
\renewcommand{\ker}{\mathrm{Ker}}
\begin{document}

\title[Quasihyperbolic Geodesics]{On Quasihyperbolic Geodesics in Banach Spaces}

\begin{abstract}
We study properties of quasihyperbolic geodesics on Banach spaces. For example, we show that in a strictly convex Banach space with the Radon-Nikodym property, the quasihyperbolic geodesics are unique. We also give an example of a convex domain $\Omega$ in a Banach space such that there is no geodesic between any given pair of points $x, y  \in \Omega\,.$ In addition, we prove that if $\X$ is a uniformly convex Banach space and its modulus of convexity is of a power type, then every geodesic of the quasihyperbolic metric, defined on a proper subdomain of $\X$, is smooth.
\end{abstract}

%    Information for first author
\author[Antti Rasila]{Antti Rasila$^1$}
\thanks{$^1$ Corresponding author.}
\address{Antti Rasila,  Department of Mathematics, Hunan Normal University, Changsha, Hunan 410081, People's Republic of China, and Aalto University, Institute of Mathematics, P.O. Box 11100, FI-00076 Aalto, Finland} 
\email{antti.rasila@iki.fi}

\author[Jarno Talponen]{Jarno Talponen}
%    Address of record for the research reported here
\address{Jarno Talponen, University of Eastern Finland, Department of Physics and Mathematics, P.O. Box 111 FI-80101, Joensuu, Finland} 
\email{talponen@iki.fi}
\date{\today}
\maketitle

\section{Introduction}

The \emph{quasihyperbolic metric} in $\R^n$ is a generalization of the hyperbolic
metric. This metric was first studied in $\R^n$ by Gehring  in joint publications \cite{GehringOsgood79,GehringPalka76} with his students Palka and Osgood in late 1970's.  Since its discovery, the quasihyperbolic metric has been widely applied in the study of geometric function theory \cite{Heinonen01,Vuorinen88}. At the same time,
many basic questions of the quasihyperbolic geometry have remained open. 
Only very recently several authors have studied questions such as convexity of balls of
small radii, uniqueness of geodesics, and quasihyperbolic trigonometry of plane domains.
See for instance \cite{ChenChenQian12, HuangPonnusamyWangWang10, Klen08,Klen09,  MartioVaisala11, RasilaTalponen12, Vaisala05,  Vuorinen10}.  We refer to \cite{Vuorinen88} for the basic properties of this metric in $\R^n.$

In Banach spaces, the properties of the quasihyperbolic metric  were first studied by  V\"ais\"al\"a in a series of articles in 1990's \cite{Vaisala90,Vaisala91,Vaisala92,Vaisala98,Vaisala99}. The quasihyperbolic metric is a crucial tool for studying quasiconformal mappings in the in finite dimensional Banach spaces because quasiconformality is defined in terms of it. Moreover, the basic tools of the finite dimensional theory in $\R^n$, such as the conformal modulus and finite dimensional measure theory, are not available in the case of infinite dimensional Banach spaces. See \cite{KRT} for 
discussion and motivations on the topic.

A result of Gehring and Osgood shows that in domains of $\R^n$ there is always a quasihyperbolic geodesic between any two points \cite{GehringOsgood79}. It was proved by Martin in \cite{Martin85}, that the geodesics of the quasihyperbolic metric in $\R^n$ are smooth. The proof makes use of M\"obius transformations, and it does not work in the Banach space setting. In this paper, our aim is to generalize these results to the Banach space  setting. 

A result of Martio and V\"ais\"al\"a \cite{MartioVaisala11} shows that if $\Omega$ is a convex domain of a uniformly convex Banach space, then each pair of points $x,y\in \Omega$ is joined by a unique quasihyperbolic geodesic. The existence argument is based on the fact that each uniformly convex Banach space is reflexive 
and the unit ball of reflexive space is compact in the weak topology. The geodesic is obtained from 
a sequence of short paths.

The first of our main results, Theorem \ref{main1}, shows by following the arguments of our earlier paper \cite{RasilaTalponen12} that in a convex domain $\Omega$ of a reflexive strictly convex Banach space the quasihyperbolic geodesics are unique. The argument boils down to taking averages of paths. We also prove by a counterexample 
that there does not necessarily exist any quasihyperbolic geodesics between any two points in a convex domain of a non-reflexive Banach space. This appears a somewhat surprising fact and it settles a problem posed in \cite{Vaisala05} in the negative.

Finally, we show that quasihyperbolic geodesics in a uniformly convex Banach space, with a power type modulus of convexity, are $C^1$ smooth. This result generalizes a classical theorem of Martin \cite{Martin85} in the Euclidean setting, as well as V\"ais\"al\"a's recent work in Hilbert spaces \cite{Vaisala07}.

\subsection{Preliminaries}

Here we consider Banach spaces $\X$ over the real field. We refer to \cite{Diestel}, \cite{DiestelUhl77}, \cite{Banach_space_book}, \cite{LindTza} and \cite{Vuorinen88} for suitable background information. 

Suppose that $\X$ is a Banach space with $\dim \X \ge 2$, and let $\Omega\subsetneq \X$ be an open path-connected domain. We call a continuous function $w\colon \Omega\to (0,\infty)$ a \emph{weight function}. Then the \emph{$w$-length} of a rectifiable arcs $\gamma \subset \Omega$ is defined by
\[
\ell_w(\gamma)=\int_{\gamma}w\big(\gamma(t)\big)\ dt.
\]
We also define a (conformal) metric $d_{w}$ on $\Omega$ by
\[
d_{w}(x,y) = \inf_{\gamma} \ell_w(\gamma)
\]
where the infimum is taken over rectifiable arcs $\gamma$ joining $x$ and $y$ in $\Omega$. It is clear that $d_w$ defines a metric in $\Omega$. If the infimum is attained for an arc $\gamma$, then we call $\gamma$ a \emph{$d_w$-geodesic}. It is easy to see that $d_w$-geodesics do not always exist.

For the weight function
\[
w(x)= \frac{1}{\dist(x,\partial\Omega)},
\]
we denote $d_w = k_{\Omega}$, and call $k_\Omega$ the quasihyperbolic metric of $\Omega$.
The open quasihyperbolic balls are given by 
\[D(x,r)=\{y\in \Omega\colon k(x,y)<r\}.\]

A set $\Omega\subset \X$ is called \emph{convex} if the line segment
\[
[x,y] := \{ tx + (1-t)y : t\in [0,1]\} \subset \Omega\textrm{ for all }x,y\in\Omega.
\] 
Note that the use of notation $[x,y]$ here is different from some texts dealing with Banach spaces.
The norm of a Banach space is said to be \emph{strictly convex} if the unit sphere contains 
no proper line segment, i.e. $\|x+y\|<2$ for $\|x\|=\|y\|=1,\ x\neq y$.

The \emph{modulus of convexity} $\delta_{\X}(\epsilon),\ 0<\epsilon \leq 2,$ is defined by
\[
\delta_{\X}(\epsilon)=\inf\big\{1-\|x+y\|/2:\ x,y\in \X,\ \|x\|=\|y\|=1,\ \|x-y\|=\epsilon\big\}.
\]
A Banach space $\X$ is called \emph{uniformly convex} if $\delta_{\X}(\epsilon)>0$ for $\epsilon>0$ 
(see Figure \ref{fig1}).

\begin{figure}

\includegraphics[width=8cm]{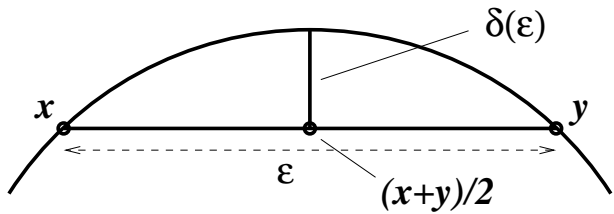}

\caption{The modulus of convexity controls the norm of averages of vectors.}\label{fig1}
\end{figure}

A space $\X$ is uniformly convex of power type $p\in [2,\infty)$ if $\delta_{\X}(\epsilon)\geq K\epsilon^{p}$ for some $K>0$. For example any reflexive $L^p$ space has this property.

A Banach space $\X$ is said to have the \emph{Radon-Nikodym property} (RNP) if the Radon-Nikodym Theorem holds for $\X$. That is, if each Banach-valued measure $\mu\colon \Sigma\to \X$, where $\Sigma$ is a sigma algebra and $|\mu|<\infty$, has the representation
\[
\mu(E)=\int_{E}\phi(x)\ d|\mu|(x),
\]
for all $E\in \Sigma$ and some Bochner integrable $\phi\colon \Omega \to \X$.  All reflexive spaces and separable dual spaces have the RNP. For example, the space $c_{0}$ of sequences converging to $0$ with the $\sup$-norm fails the RNP. 

A path $\gamma\colon [0,1]\to \X$ is differentiable at $x_{0}\in (0,1)$ if
\[
\lim_{t\to 0}\frac{\gamma(x_{0}+t)-\gamma(x_{0})}{t}\quad \mathrm{exists\ in}\ \X.
\]
This derivative is denoted by $\gamma'$. A Banach space $\X$ has the RNP if and only if each absolutely continuous path $\gamma\colon [0,1]\to \X$ is differentiable a.e. In such a case the fundamental theorem of analysis holds:
\[\gamma(t)=\gamma(0)=\int_{0}^{t}\gamma'\ ds\]
where the integral is taken in the Bochner sense. We say that path $\gamma\colon [a,b]\to \X$ is $C^1$-smooth if 
the derivative $\gamma'$ exists and is continuous away from the endpoints $a$ and $b$ 
(where the derivative is not defined in the usual sense). A \emph{geodesic} is a path in a metric space which has 
the least path length among the paths connecting its endpoints. We call a metric space \emph{geodesic} if 
each pair of points can be joined by a geodesic (not necessarily unique).

\section{Existence and uniqueness of geodesics}

In general domains the quasihyperbolic geodesics need not be unique. A simple example is the space punctured at a point. For the uniqueness one needs additional assumptions such as convexity. Another related topic is the convexity of balls with small radii which was recently proved by Kl\'en in the special case of the punctured space \cite{Klen08,Klen09}, and by V\"ais\"al\"a \cite{Vaisala12} for general plane domains.

By following the arguments in \cite{RasilaTalponen12} one can check the following:

\begin{theorem}
\label{main1}
In a convex domain $\Omega$ of a strictly convex Banach space with the RNP the quasihyperbolic geodesics are unique.
\end{theorem}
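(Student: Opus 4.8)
The plan is to argue by contradiction: suppose $\gamma_1, \gamma_2 \colon [0,1] \to \Omega$ are two distinct quasihyperbolic geodesics joining $x$ to $y$, each parametrized (say) proportionally to arc length, so that $\ell_w(\gamma_1) = \ell_w(\gamma_2) = k_\Omega(x,y) =: L$ with $w(z) = 1/\dist(z,\partial\Omega)$. The key device is to consider the \emph{averaged path} $\gamma = \tfrac12(\gamma_1 + \gamma_2)$, which is well-defined and stays in $\Omega$ by convexity of $\Omega$. First I would check that $\gamma$ is absolutely continuous with $\gamma' = \tfrac12(\gamma_1' + \gamma_2')$ a.e.; here the RNP is used, since it guarantees that absolutely continuous paths are differentiable almost everywhere and satisfy the fundamental theorem of calculus, so the averaged path is again absolutely continuous and rectifiable with the expected derivative.

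The heart of the matter is a pointwise estimate showing $\ell_w(\gamma) \le L$ with equality forcing $\gamma_1 = \gamma_2$. For this I would combine two convexity facts. First, for the distance function: since $\Omega$ is convex, $z \mapsto \dist(z,\partial\Omega)$ is concave on $\Omega$, hence for each $t$,
\[
\dist\big(\gamma(t),\partial\Omega\big) \;\ge\; \tfrac12\dist\big(\gamma_1(t),\partial\Omega\big) + \tfrac12\dist\big(\gamma_2(t),\partial\Omega\big),
\]
so that $w(\gamma(t)) \le$ the harmonic-type average controlled by $w(\gamma_1(t))$ and $w(\gamma_2(t))$. Second, for the speed: $\|\gamma'(t)\| \le \tfrac12\|\gamma_1'(t)\| + \tfrac12\|\gamma_2'(t)\|$ by the triangle inequality. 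Multiplying and integrating, one wants to conclude $\int_0^1 w(\gamma(t))\|\gamma'(t)\|\,dt \le \tfrac12\ell_w(\gamma_1) + \tfrac12\ell_w(\gamma_2) = L$. Because $\gamma$ is an admissible competitor, $\ell_w(\gamma) \ge L$, so equality holds throughout, and in particular $\|\gamma_1'(t) + \gamma_2'(t)\| = \|\gamma_1'(t)\| + \|\gamma_2'(t)\|$ for a.e.\ $t$. This is where strict convexity enters: in a strictly convex space, equality in the triangle inequality forces $\gamma_1'(t)$ and $\gamma_2'(t)$ to be nonnegatively proportional a.e.; combined with the equality in the concavity estimate for $\dist(\cdot,\partial\Omega)$ and the fact that the two paths share endpoints, one deduces $\gamma_1'(t) = \gamma_2'(t)$ a.e., whence $\gamma_1 = \gamma_2$ by integration.

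The main obstacle I anticipate is making the ``equality forces proportionality'' step fully rigorous when the speeds may vanish and the proportionality factor varies with $t$: one must verify that after the natural reparametrizations the two velocity fields genuinely coincide, rather than merely being parallel with a fluctuating ratio. The clean way around this is to reparametrize both geodesics by $w$-arclength from the start, so that $w(\gamma_i(t))\|\gamma_i'(t)\|$ is constant equal to $L$ for each $i$; then the equality analysis pins down $w(\gamma_1(t)) = w(\gamma_2(t))$ and $\gamma_1'(t) = \gamma_2'(t)$ simultaneously. A secondary technical point is justifying concavity of $z \mapsto \dist(z,\partial\Omega)$ on a convex domain and checking that the averaged path, a priori only AC, does not pick up extra length — both of which are handled by the RNP-based differentiation theorem quoted in the preliminaries. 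Since all of this parallels the argument in \cite{RasilaTalponen12}, I would present it compactly, emphasizing only the two places (RNP for the calculus, strict convexity for rigidity in the triangle inequality) where the hypotheses are actually consumed.
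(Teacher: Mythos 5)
Your overall strategy is the same as the paper's: average the two geodesics, use convexity of $\Omega$ (concavity of $z\mapsto\dist(z,\partial\Omega)$), the triangle inequality for speeds, the RNP for a.e.\ differentiation and the fundamental theorem of calculus, and strict convexity for a rigidity conclusion. However, there are two places where your sketch does not yet constitute a proof. First, the inequality obtained by ``multiplying and integrating'' the two convexity facts is not valid for arbitrary parametrizations: it requires the mediant bound $\frac{a_1+a_2}{d_1+d_2}\le\frac12\bigl(\frac{a_1}{d_1}+\frac{a_2}{d_2}\bigr)$, which is false in general (take $a_1=d_1=1$, $a_2=10$, $d_2=2$). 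It becomes true, with value exactly $L$, only after both geodesics are parametrized proportionally to quasihyperbolic arclength so that $w(\gamma_i(t))\|\gamma_i'(t)\|\equiv L$; you do mention this reparametrization, but as a remedy for a different issue, whereas it is needed already to make the basic averaging estimate correct (this is precisely the normalization the paper imposes at the outset).

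Second, and more seriously, the decisive rigidity step is asserted rather than proved. After the equality analysis, strict convexity only gives that $\gamma_1'(t)$ and $\gamma_2'(t)$ are positively parallel a.e., and under the arclength normalization the ratio is forced to be $d(\gamma_2(t))/d(\gamma_1(t))$; no pointwise consideration, and in particular not the equality in the concavity estimate, shows this ratio equals $1$, so the claim that the analysis ``pins down $w(\gamma_1(t))=w(\gamma_2(t))$ and $\gamma_1'(t)=\gamma_2'(t)$'' is exactly the unproved crux. This gap is fixable: setting $f=\gamma_2-\gamma_1$, one has a.e.\ $\|f'(t)\|=L\,|d(\gamma_2(t))-d(\gamma_1(t))|\le L\|f(t)\|$ by $1$-Lipschitzness of the distance function, and since $f(0)=0$ a Gr\"onwall argument (using the RNP to write $f(t)=\int_0^t f'$) yields $f\equiv 0$; alternatively, the paper closes the argument differently, by observing that equality would place a nondegenerate line segment on the boundary of a quasihyperbolic ball, contradicting the strict convexity of such balls established in \cite{RasilaTalponen12} for strictly convex spaces with the RNP. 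As it stands, your proposal identifies the right framework and the right hypotheses, but the step where strict convexity is converted into uniqueness is missing.
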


\begin{remark}
The existence of geodesics in the reflexive case was given by V\"ais\"al\"a in \cite{Vaisala05}.
\end{remark}

\begin{proof}[Proof of Theorem \ref{main1}]

For a path $\gamma\colon [0,1]\to \Omega$ and $t_1,t_2\in[0,1]$ denote by $\ell_{\mathrm{qh}}(\gamma,t_1,t_2)$ the quasihyperbolic length of $\gamma([t_1,t_2])$. We write $\ell_{\mathrm{qh}}(\gamma)$ for $\ell_{\mathrm{qh}}(\gamma,0,1)$. 

Suppose that $\gamma_1,\gamma_2\colon [0,1] \to \Omega$ are two different quasihyperbolic geodesics connecting points $x,y\in \Omega$. Then $\gamma_1(0)=\gamma_2(0)=x$, $\gamma_1(1)=\gamma_2(1)=y$, and $\ell_{\mathrm{qh}}(\gamma_1)=\ell_{\mathrm{qh}}(\gamma_2)$. 

We will use the argument in the proof of \cite[4.3]{RasilaTalponen12} but here the 
situation is a bit easier, since the geodesics are assumed to exist.

We may assume that 
\[
\ell_{\mathrm{qh}}(\gamma_1,0,t)=\ell_{\mathrm{qh}}(\gamma_2,0,t)=t\ell_{\mathrm{qh}}(\gamma_1),\textrm{ for all }t\in[0,1].
\]
For $s\in [0,1]$, we define the average path $\gamma_s$ by the formula 
\[\gamma_s(t)=s\gamma_2(t)+(1-s)\gamma_1(t).\] 

Now suppose that $x_1,x_2$, $x_1\neq x_2$ are points such that for some $r_0\in (0,1)$, $\gamma_1(r_0)=x_1$ and $\gamma_2(r_0)=x_2$. As in \cite[4.3]{RasilaTalponen12}, we obtain the estimate
\begin{multline}
\label{r0est}
\ell_{\mathrm{qh}}(\gamma_s,0,r_0) \le s\ell_{\mathrm{qh}}(\gamma_2,0,r_0) + (1-s)\ell_{\mathrm{qh}}(\gamma_1,0,r_0)\\
=\ell_{\mathrm{qh}}(\gamma_1,0,r_0) = \ell_{\mathrm{qh}}(\gamma_2,0,r_0).
\end{multline}

On the other hand, by a similar argument, we have 
\[\ell_{\mathrm{qh}}(\gamma_s,r_0,1)\le \ell_{\mathrm{qh}}(\gamma_1,r_0,1)\le \ell_{\mathrm{qh}}(\gamma_2,r_0,1)\quad 
\mathrm{for\ all}\ s\in [0,1],\] 
and thus equality holds in \eqref{r0est}. It follows that the boundary of the quasihyperbolic ball $D(x,r_0)$ contains a line segment $[x_1,x_2]$. This is a contradiction because in the proof of \cite[Theorem 4.1]{RasilaTalponen12} it was established that the inequality \cite[(4.6)]{RasilaTalponen12} is strict in a strictly convex Banach space with the RNP, and thus, the quasihyperbolic balls are strictly convex (cf. the proof of \cite[Theorem 2.3]{Vaisala09}).
\end{proof}

Similarly as in the argument in \cite[4.3]{RasilaTalponen12} we see that if the distance function $d$ is 
strictly concave (excluding possibly finitely many points of the space), then the QH geodesics are unique.

As mentioned previously, it is known that in any convex domain of a reflexive space there is 
always a geodesic between two points. If the space has the property of being strictly convex (with respect to 
a given norm), then the geodesic is unique. It turns out here, somewhat surprisingly, that if one removes 
the reflexivity assumption, then the statement does not remain valid, even for half spaces.
 
\begin{theorem}\label{thm: non}
Let $(f_{n})\in \ell^{1}\setminus c_{00}$, $\Omega=\{(x_{n})\in c_{0}:\ \sum f_{n}x_{n}>0\}$ and we consider 
$\Omega$ in the quasihyperbolic metric. Given any pair of distinct points $x,y\in \Omega$ there is no geodesic between them.
\end{theorem}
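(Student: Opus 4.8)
The plan is to assume a geodesic exists and to squeeze it, through a first–variation (Euler–Lagrange) argument, until it is forced to be constant. The point is that the quasihyperbolic weight here depends only on the linear ``height'' $f(x)$, while a perturbation in the direction of the constant sequence pins down the geodesic's derivative so tightly that the tails of its coordinate derivatives (which vanish, since the path takes values in $c_0$) clash with the fact that $(f_n)$ is not finitely supported.

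First, a normalization. The map $(x_n)\mapsto(\operatorname{sign}(f_n)x_n)$ is a surjective linear isometry of $c_0$ that carries $\{x:\sum_n|f_n|x_n>0\}$ onto $\Omega$ and preserves distances to the boundary, so we may assume $f_n\ge0$ for all $n$; rescaling $f$ only rescales the metric by a constant (hence does not affect geodesics), so we may also assume $\sum_n f_n=1$. Then $\partial\Omega=\ker f$ is a closed hyperplane of norm one, so $\dist(x,\partial\Omega)=f(x)$ for $x\in\Omega$ and the quasihyperbolic weight is $w(x)=1/f(x)$; and $(f_n)\notin c_{00}$ says precisely that $f_n>0$ for infinitely many $n$.

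Now suppose, for contradiction, that $\gamma\colon[0,1]\to\Omega$ is a geodesic with $\gamma(0)=x\neq y=\gamma(1)$. Reparametrizing by norm arclength, $\gamma$ is $1$–Lipschitz, $\gamma([0,1])$ is a compact subset of $\Omega$ so $\delta:=\min_t f(\gamma(t))>0$, the metric speed $|\gamma'|(t)$ and the coordinate derivatives $\gamma_n'(t)$ exist a.e., $f\circ\gamma$ is Lipschitz with $|(f\circ\gamma)'|\le|\gamma'|$ a.e., and $|\gamma'|>0$ on a set of positive measure (otherwise $\gamma$ would be constant). Fix $N$, let $\mathbf 1_N=\sum_{n\le N}e_n\in c_0$ and $\sigma_N=\sum_{n\le N}f_n$, and for $\eta\in C^\infty[0,1]$ with $\eta(0)=\eta(1)=0$ consider $\gamma_\epsilon:=\gamma+\epsilon\,\eta\,\mathbf 1_N$; for small $|\epsilon|$ this is a competing path in $\Omega$ with the same endpoints and $f(\gamma_\epsilon(t))=f(\gamma(t))+\epsilon\,\eta(t)\,\sigma_N$. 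Since $\gamma$ minimizes quasihyperbolic length, $\epsilon\mapsto\ell_{\mathrm{qh}}(\gamma_\epsilon)$ is minimized at $\epsilon=0$, so the first variation vanishes. Because the perturbation touches only the first $N$ coordinates, the variation of the metric speed at $t$ is $\eta'(t)$ times the value on $\mathbf 1_N$ of a norming functional of the sup–norm at $\gamma'(t)$ — that is, $\pm\eta'(t)$ when $\max_{n\le N}|\gamma_n'(t)|$ is attained (uniquely) and equals $|\gamma'|(t)$, and $0$ when the speed is carried by coordinates beyond $N$. Inserting this and integrating by parts in $t$ yields, for a.e. $t$ with $|\gamma'|(t)>0$, the Euler–Lagrange relations
\[
|\gamma'|(t)=\max_{n\le N}\bigl|\gamma_n'(t)\bigr|,\qquad \bigl|(f\circ\gamma)'(t)\bigr|=\sigma_N\,|\gamma'|(t).
\]
Choosing $n_0$ with $f_{n_0}>0$ and then $n_1>n_0$ with $\sigma_{n_1}>\sigma_{n_0}$ — possible since $(f_n)\notin c_{00}$ — and applying the second relation with $N=n_0$ and with $N=n_1$, we get $\sigma_{n_0}\,|\gamma'|(t)=\sigma_{n_1}\,|\gamma'|(t)$ at a.e.\ $t$ with $|\gamma'|(t)>0$, whence $|\gamma'|(t)=0$: a contradiction. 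Thus $|\gamma'|=0$ a.e., so every $\gamma_n$ is constant and $\gamma\equiv x\neq y$, which is absurd. Hence no geodesic joins $x$ and $y$.

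The main obstacle is the rigor of the first–variation step, since $c_0$ fails the Radon–Nikodym property and an arclength geodesic need not be differentiable as a $c_0$–valued map: one must argue throughout with metric and one–sided derivatives and with the set–valued subdifferential of the sup–norm, and in particular control the ``tail speed'' $\limsup_{h\to0}\sup_{n>N}|\gamma_n(t+h)-\gamma_n(t)|/|h|$, which is what justifies the identity $|\gamma'|(t)=\max_{n\le N}|\gamma_n'(t)|$ above. This can be eased by first proving $k_\Omega(x,y)=k_{\widetilde\Omega}(x,y)$ for the analogous half–space $\widetilde\Omega=\{z\in\ell^\infty:f(z)>0\}$ — obtained by pushing paths of $\widetilde\Omega$ into $c_0$ via the coordinate projections $P_N$, using $f\in\ell^1$ and compactness of path images — so that a geodesic of $\Omega$ is also a geodesic of $\widetilde\Omega$ and may be perturbed by the truly optimal vertical direction $\mathbf 1=(1,1,\dots)\in\ell^\infty$, for which $f(\mathbf 1)=\|\mathbf 1\|_\infty=1$; then the Euler–Lagrange relation simplifies to $|(f\circ\gamma)'|=|\gamma'|$ a.e., which, together with the vanishing coordinate tails of displacements $\gamma(t+h)-\gamma(t)$ in $c_0$, again forces $\gamma$ to be constant.
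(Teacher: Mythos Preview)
Your approach is genuinely different from the paper's, and the central idea --- that a geodesic would have to satisfy incompatible first-variation identities for different truncation levels $N$ --- is an appealing heuristic. However, as you yourself flag in the final paragraph, the argument as written has a real gap at exactly the point where the work is done.

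The difficulty is not merely technical polish. You need the two relations
\[
|\gamma'|(t)=\max_{n\le N}|\gamma_n'(t)|,\qquad |(f\circ\gamma)'(t)|=\sigma_N\,|\gamma'|(t),
\]
but neither is derived. For the first variation to produce an equation (rather than an inequality or inclusion) you need the sup-norm to be G\^ateaux differentiable at $\gamma'(t)$ in the direction $\mathbf 1_N$, which requires the maximum of $|\gamma_n'(t)|$ to be attained uniquely and within the first $N$ coordinates. You have not shown this; in fact the first displayed relation is precisely that hypothesis, not a consequence of the variation. When the maximizing index is not unique, the subdifferential of $\|\cdot\|_\infty$ is a whole face, $\langle v^*,\mathbf 1_N\rangle$ ranges over an interval, and integrating by parts against the discontinuous selection $s(t)\chi(t)$ produces singular terms you have not accounted for. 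On top of this, $c_0$ fails the RNP, so $\gamma'(t)$ need not exist as a vector at all; the ``tail speed'' control you mention is exactly what would be needed, but it is asserted, not proved. The $\ell^\infty$ detour at the end has the same issues (the norm is still non-smooth, and now even the coordinatewise derivatives need care).

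By contrast, the paper avoids all differentiability questions. It simply builds, by hand, a competitor $\gamma_k$ that is no longer in norm-length (replacing each coordinate by the maximum with a tent function of slope $\delta\le\|\gamma'\|$ cannot increase the sup-norm of increments) and is strictly higher in the $f$-direction at $t=1/2$ for large $k$ (because $\gamma(1/2)\in c_0$ while the tents push infinitely many coordinates up to $\delta/2$, and $f$ has infinitely many positive entries). This gives a strictly shorter quasihyperbolic length and finishes the proof without any Euler--Lagrange machinery. Your variational picture is morally why such competitors exist, but the paper's explicit construction is what makes the argument rigorous.
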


Denote by $\X^*$ the dual of a Banach space $\X$. First, let us recall the following well-known fact:
\begin{fact}
\label{fact1}
Let $f\in \X^*,\ \|f\|=1,$ and $x\in \X$. Then $f(x)=\dist(x,\ker(f))$.
\end{fact}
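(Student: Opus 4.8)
The plan is to prove the two-sided bound $\dist(x,\ker(f)) = |f(x)|$ and then observe that, under the normalization $\|f\|=1$ together with the sign situation relevant to the application (where $f(x)\geq 0$, as in the half-space $\Omega$ of Theorem \ref{thm: non}), this reduces to the stated identity $f(x)=\dist(x,\ker(f))$.

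First I would establish the lower bound. For any $z\in\ker(f)$ we have $f(z)=0$, so $f(x)=f(x-z)$, and hence
\[
|f(x)| = |f(x-z)| \leq \|f\|\,\|x-z\| = \|x-z\|,
\]
using $\|f\|=1$. Taking the infimum over $z\in\ker(f)$ yields $|f(x)|\leq \dist(x,\ker(f))$.

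Next comes the upper bound, where the idea is to exhibit elements of $\ker(f)$ that come arbitrarily close to $x$. Since $\|f\|=1$, for each $\epsilon>0$ there is a unit vector $y$ with $f(y)>1-\epsilon$; in particular $f(y)\neq 0$. Setting $z = x - \frac{f(x)}{f(y)}\,y$, one checks directly that $f(z)=0$, so $z\in\ker(f)$, and
\[
\dist(x,\ker(f)) \leq \|x-z\| = \frac{|f(x)|}{|f(y)|} < \frac{|f(x)|}{1-\epsilon}.
\]
Letting $\epsilon\to 0$ gives $\dist(x,\ker(f))\leq |f(x)|$, and combining the two bounds yields the identity $\dist(x,\ker(f)) = |f(x)|$.

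The main obstacle is contained entirely in this upper-bound step: because no reflexivity or compactness is assumed on $\X$, the supremum defining $\|f\|$ need not be attained, so there may be no single vector $y$ realizing $f(y)=1$ and correspondingly no exact nearest point in $\ker(f)$. This forces the $\epsilon$-approximation above rather than a direct minimizer, and it is the only place where care is genuinely needed. The sign discrepancy is merely cosmetic: the general statement is $|f(x)|=\dist(x,\ker(f))$, and applying it after replacing $x$ by $-x$ when necessary (or simply restricting to the relevant region where $f(x)\geq 0$) recovers the displayed form $f(x)=\dist(x,\ker(f))$.
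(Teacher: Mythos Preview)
Your proof is correct and follows essentially the same approach as the paper: both arguments handle the lower bound via $|f(x-z)|\leq\|f\|\,\|x-z\|$ and the upper bound by choosing an approximate norming element for $f$ (the paper picks $y$ with $f(y)=1$, $\|y\|<1+\delta$, while you pick a unit $y$ with $f(y)>1-\epsilon$, which is an equivalent normalization). Your remark about the sign is apt; the paper tacitly reduces to the case $f(x)=1$ by scaling, which plays the same role.
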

\begin{proof}
It suffices to check the claim in the case $f(x)=1$. Given $\delta>0$, there is $y\in\X$ 
with $\|y\|<1+\delta$ and $f(y)=1$. Writing $x=y+k$ for some $k\in \ker(f)$ we observe that 
\[\dist(x,\ker(f))=\dist(y,\ker(f))\leq \dist(y,\{0\})=\|y\|< 1+\delta.\]
On the other hand, if $\dist(x,\ker(f))<1$, then there exists $z \in \X,\ \|z\|<1,$ and $h\in \ker(f)$
such that $x=z+h$. This is impossible, since $f(x)=f(z)=1$ and $\|f\|=1$ by the assumptions.
\end{proof}

\begin{proof}[Proof of Theorem \ref{thm: non}]
Without loss of generality we may assume, by rotating the coordinates if necessary, that $f=(f_{n})\in \ell^{1}$ is coordinatewise non-negative. We may also assume that $\|f\|=1$. Observe that $\langle f,x\rangle=\dist_{\|\cdot\|}(x,\partial\Omega)$ for $x\in \Omega$ according to Fact \ref{fact1}. We denote by $M\subset \N$ the infinite subset of indices $m$ such that $f_{m}>0$.

Fix $x=(x_{n}),y=(y_{n})\in \Omega,\ x\neq y$. Assume to the contrary to the statement of the theorem that 
$\gamma\colon [0,1]\to \Omega$ is a quasihyperbolic geodesic joining $x$ and $y$. Let $n\in \N$ be such that $x_{n}-y_{n}\neq 0$. By symmetry we may assume that this difference is positive. We write $x_{n}-y_{n}=\delta> 0$.

We may assume that $\gamma$ is parametrized by its norm length. Clearly $\ell_{\|\cdot\|}(\gamma)\geq \delta$.
 
Next we define a sequence $(\gamma_{k})$ of modifications of $\gamma$ for $k=1,2,\ldots$. Let $e_{n}^{\ast}\colon c_{0}\to \R$ be the functional given by $e_{n}^{\ast}(z)=z_{n}$ for each $z\in c_{0}$. For each $n\in \N$, let 
$\alpha_{n}\colon [0,1]\to \R$ be the polygonal line defined by the following conditions: 
$\alpha_n (0)=\alpha_n (1)=\min(x_n , y_n)$ and $\alpha_n (1/2)=\min(x_n , y_n) + \delta/2$. 
Note that the absolute value of the slope of $\alpha_n$ is $\delta$ (which is defined for values $t\neq 0, 1/2, 1$).
We let $\gamma_{k}$ be such that $e_{n}^{\ast}(\gamma_{k}(t))=\max(e_{n}^* (\gamma(t)),\alpha_n (t))$ for $1\leq n \leq k$, $t\in [0,1]$ and $e_{n}^{\ast}(\gamma_{k}(t))=e_{n}^{\ast}(\gamma(t))$ for $n>k$ and $t\in [0,1]$ (see Figure \ref{fig3}).

\begin{figure}

\includegraphics[width=10cm]{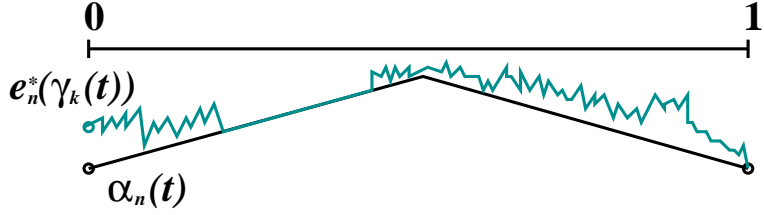}

\caption{Modifications of $\gamma$ projected on a single coordinate.}\label{fig3}
\end{figure}

\noindent{\it Claim:} The inequality $\ell_{\|\cdot\|}(a,b,\gamma_{k})\leq \ell_{\|\cdot\|}(a,b,\gamma)$ holds for 
$0\leq a < b\leq 1$. Indeed, by approximation it suffices to check that the above claim holds in the case where $\gamma$ is a polygonal line with finitely many, say, $p$ line segments. This, in turn, reduces to studying such paths 
supported in only finitely many, say, $m$ first coordinates.

Denote by $c_{00}(m)=[e_i\colon 1\leq i \leq m]\subset c_0$ the corresponding subspace. Since $c_{00}(m)$ is reflexive, being finite-dimensional, it possesses the RNP, and thus we may consider
\[
\gamma(t)=\gamma(0)+\int_{0}^{t} \gamma'(s)\ ds. 
\]
Since the path $\gamma$ is a polygonal line having only finitely many supporting coordinates and line segments,
we may find a finite decomposition $0=t_0< t_1 < \ldots < t_l =1$ of $[0,1]$ in such a way that
$e_{i}^* (\gamma_{k}(t))=\alpha_{i}(t)$ for all $t\in [t_j , t_{j+1}]$ or $e_{i}^* (\gamma_{k}(t))=e_{i}^* (\gamma_{k}(t))$ for all $t\in [t_j , t_{j+1}]$ (or both) for each $j$. Recall that the norm in $c_{00}(m)$ is 
$\|x\|=\max_{1\leq i \leq m}|x_i |$. By using that $|\alpha_{i}'(t)|=\delta \leq \|\gamma'(t)\|$ for a.e. $t$ we obtain the claim.

Note that
\[
\sum_{i\in \N} f_i e_{i}^{\ast}(\gamma_{k}(t))\geq \sum_{i\in \N} f_i e_{i}^{\ast}(\gamma(t)),
\]
for each $k$ and $t$, by the construction of the paths $\gamma_k$. Thus, by applying the above claim we obtain 
$\ell_{\mathrm{qh}}(\gamma_{k})\leq \ell_{\mathrm{qh}}(\gamma)$ for $k$, since $1/\langle f, \gamma\rangle$ is exactly the 
quasihyperbolic weight of a path $\gamma$. Moreover, since $(f_{n})\in \ell^{1}\setminus c_{00}$ and 
$\gamma(1/2)\in c_0$ and $\gamma_k(1/2)\to \delta/2$ as $k\to \infty$ it follows that 
\[\lim_{k\to \infty} \sum_{i\in \N} f_i e_{i}^{\ast}(\gamma_{k}(1/2))>\sum_{i\in \N} f_i e_{i}^{\ast}(\gamma(1/2)).\]
This means that $\ell_{\mathrm{qh}}(\gamma_{k}) < \ell_{\mathrm{qh}}(\gamma)$ for $k$ large enough. This provides us 
with a contradiction, since $\gamma$ was assumed to be a geodesic. 
\end{proof}

Suppose that $\X$ is a Banach space with $\dim \X \ge 2$, and let $\Omega\subsetneq \X$ be an open path-connected domain. Recall that a domain $\Omega$ is geodesic if for every pair of points $x,y\in \Omega$ there exists a geodesic connecting $x$ and $y$. 

\begin{conjecture}
We conjecture that a strictly convex Banach space is reflexive if and only if each of its open half spaces is geodesic in the quasihyperbolic metric. As it was established above, the 'only if' direction holds (since reflexivity 
implies the RNP). 
\end{conjecture}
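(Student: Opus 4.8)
We sketch a possible route to the conjecture. The \emph{only if} direction being known, it remains to prove that \emph{a strictly convex non-reflexive space has a non-geodesic open half space}, and the plan is to reproduce the mechanism of Theorem \ref{thm: non}.

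The first step is to pin down the half space. By James' theorem a non-reflexive $\X$ carries a functional $f\in\X^{*}$ with $\|f\|=1$ which does not attain its norm on $B_{\X}$; put $\Omega=\{x\in\X:\ f(x)>0\}$. By Fact \ref{fact1} the quasihyperbolic weight of $\Omega$ is precisely $x\mapsto 1/f(x)$, so $\Omega$ is a metric analogue of the half space of Theorem \ref{thm: non}, with $f(x)$ in the role of the coordinate sum $\langle f,x\rangle$. It then suffices to fix $x,y\in\Omega$ with $f(x)=f(y)$ and $\|x-y\|$ large and to show that $k_{\Omega}(x,y)$ is not attained.

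The decisive step is to shorten an arbitrary candidate geodesic $\gamma\colon[0,1]\to\Omega$. After reparametrizing by quasihyperbolic arclength one has $\|\gamma'(t)\|=L\,f(\gamma(t))$ a.e., with $L=k_{\Omega}(x,y)$. The \emph{local} source of slack is that every rectifiable competitor $\sigma$ satisfies $|f(\sigma'(t))|\le\|\sigma'(t)\|$ with strict inequality a.e.\ (equality would make $\sigma'(t)/\|\sigma'(t)\|$ a norm-attaining direction for $f$), but this slack is not uniform, since $\sup_{S_{\X}}f=1$ is still $1$; so, exactly as in Theorem \ref{thm: non}, the improvement has to be collected over infinitely many disjoint pieces, the job done there by the hypothesis $(f_{n})\in\ell^{1}\setminus c_{00}$, which guarantees that a fixed path in $c_{0}$ sees only finitely much of the ``mass'' of $f$. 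To imitate this I would invoke a James-type system: sequences $(u_{n})\subset B_{\X}$ and $(g_{n})\subset B_{\X^{*}}$ witnessing non-reflexivity (so $g_{m}(u_{n})=\theta$ for $m\le n$ and $=0$ for $m>n$), chosen so that in addition $f(u_{n})$ stays bounded away from $0$, as a surrogate for the coordinate system $(e_{n},e_{n}^{*})$ of $c_{0}$; using the $g_{n}$ as coordinates one builds wedge detours $\alpha_{n}$ along $u_{n}$, of slope at most $\sup_{t}\|\gamma'(t)\|$, and lets $\gamma_{k}$ be the ``upper envelope'' of $\gamma$ and the first $k$ wedges. One then checks, mirroring Theorem \ref{thm: non}: (i) $\ell_{\|\cdot\|}(\gamma_{k})\le\ell_{\|\cdot\|}(\gamma)$, the envelope does not lengthen the path; (ii) $f(\gamma_{k}(t))\ge f(\gamma(t))$ for all $t$, hence $\ell_{\mathrm{qh}}(\gamma_{k})\le\ell_{\mathrm{qh}}(\gamma)$; (iii) $f(\gamma_{k}(1/2))$ increases strictly past $f(\gamma(1/2))$ in the limit, using that the $g_{n}$-coordinates of the fixed vector $\gamma(1/2)$ are summably small. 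This gives $\ell_{\mathrm{qh}}(\gamma_{k})<\ell_{\mathrm{qh}}(\gamma)$ for large $k$, contradicting minimality.

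The hard part will be step (i), together with actually producing a James system adapted to $f$. For the $\sup$-norm of $c_{0}$ the envelope operation is trivially nonexpansive, but a strictly convex norm has no lattice structure (indeed such a space contains no isometric $c_{0}$- or $\ell^{1}$-type subspace), so at best an \emph{approximate} envelope is available, losing a factor $1+\epsilon_{n}$ in norm length at stage $n$; one then has to arrange the cumulative norm-length loss to be beaten by the cumulative quasihyperbolic gain produced by the detours, a bookkeeping problem that is, I expect, the real content of the conjecture. A subsidiary point: without the reductions to the finite-dimensional subspaces $c_{00}(m)$ used in Theorem \ref{thm: non}, one cannot appeal to the RNP to differentiate competitors (strict convexity alone does not give the RNP), so one should work with polygonal competitors throughout, for which $\gamma'$ is elementary. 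Should the accumulation scheme prove too lossy, a fallback is a first-variation argument: a geodesic must leave each endpoint in a direction $v$ with $f(v)=\|f\|\,\|v\|$, which is impossible; but this presupposes one-sided differentiability of geodesics, which in a merely strictly convex space is exactly the kind of statement established in this paper only under the stronger power-type hypothesis, and so would settle the conjecture at best for uniformly convex spaces of power type.
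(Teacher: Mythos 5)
This statement is a conjecture: the paper offers no proof of the ``if'' direction, only the special case of Theorem \ref{thm: non} for $c_0$ (which is not strictly convex) together with an unproved remark that the example survives a strictly convex renorming of $c_0$. Your proposal is, as you yourself say, a program rather than a proof, so it cannot be counted as closing the statement; still, it correctly isolates the mechanism of Theorem \ref{thm: non}: choosing the half space $\{f>0\}$ with $f$ a non-norm-attaining functional (James), using Fact \ref{fact1} to identify the weight with $1/f(x)$, and exploiting the pointwise slack $|f(v)|<\|v\|$ which is not uniform over the sphere. The genuine gap is your step (i). In the paper the coordinatewise upper envelope $e_n^*(\gamma_k(t))=\max(e_n^*(\gamma(t)),\alpha_n(t))$ does not increase norm length precisely because the $\sup$-norm of $c_0$ is a lattice norm in the coordinates $(e_n,e_n^*)$; a strictly convex norm admits no such lattice/band structure, and your proposed ``approximate envelope losing a factor $1+\epsilon_n$'' is not constructed, nor is the James system adapted to $f$ (with $f(u_n)$ bounded away from $0$ and with the $g_n$-coordinates of a fixed vector summably small, the role played in $c_0$ by $\gamma(1/2)\in c_0$ against $f\in\ell^1\setminus c_{00}$). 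Making the cumulative quasihyperbolic gain in (iii) beat the cumulative norm-length loss in (i) is exactly the open content of the conjecture, so the proposal stops where the difficulty begins.

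Two further cautions. First, your fallback first-variation argument is incorrect: it is not true that a quasihyperbolic geodesic must leave its endpoints in a direction $v$ with $f(v)=\|f\|\,\|v\|$. Already in the Euclidean upper half plane, where $f$ does attain its norm, geodesics are circular arcs orthogonal to the boundary, and their tangent directions at the endpoints are in general far from the direction maximizing $f$; fixed-endpoint first variation imposes no such constraint, so this route cannot rescue the argument even under power-type uniform convexity. Second, the identity $\|\gamma'(t)\|=L\,f(\gamma(t))$ a.e.\ presupposes differentiability of the candidate geodesic, which strict convexity alone does not provide (no RNP); you note this and retreat to polygonal competitors, which is the right move and is also how the paper handles it via the finite-dimensional reductions $c_{00}(m)$, but then the shortening argument must be carried out entirely at the level of competitors, which your sketch does not do. In sum, the proposal reproduces the known special case in spirit but leaves the conjecture open, consistent with its status in the paper.
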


In the non-strictly convex case the geodesics of half spaces need not be unique, even in finite dimensional 
setting. For example, let $\Omega=\{(x,y)\in \R^2 \colon\ y>0\}\subset c_{00}(2)$ (with the $\sup$-norm). Then the straight line between $(0,1)$ and $(0,2)$ is a quasihyperbolic geodesic. Note that the polygonal line between these two points  passing through $(1/2, 3/2)$ is also a quasihyperbolic geodesic.

Without giving a proof we note that the above half-space example on $c_0$ can be modified in such a way that 
the Banach space can be taken to be even strictly convex. Indeed, a well-known equivalent strictly convex norm 
$|||\cdot |||$ on $c_0$ is given by 
\[|||(x_n)|||^2 = \|(x_n)\|_{c_0}^2 + \sum_{n=1}^{\infty}2^{-n}|x_n|^2\]
and $(f_n)\in \ell^1,\ \|(f_n)\|=1,$ can be selected in such a way that\\ $\limsup_n f_n /2^{-n}=\infty$. 
Note that $|||(f_n)|||^* \leq 1$, so that $\dist(x,\ker f)\geq \langle f,x\rangle$. 

\section{Smoothness of geodesics}

The modulus of convexity $\delta_{\X}$ of Banach space is not necessarily convex. However, it has a largest convex minorant $\delta$. It is easy to see that this is strictly positive if $\delta_{\X}$ is such. In what follows we will use the greatest convex minorant $\delta$ in the place of $\delta_{\X}$. A modulus of convexity $\delta_\X$ is said to have a power type if there is $p\geq 2$ such that $\delta_\X (\epsilon)\geq c \epsilon^p$ for some $c>0$. In such a case we will apply the convex lower bound $c \epsilon^p$ in place of $\delta_\X$. See also Figure \ref{fig1}.

Recall that a function $f$ is H\"older continuous if its modulus of continuity $\nu$ satisfies $\nu(\epsilon)\leq c \epsilon^p$ for some $c,p>0$.

\begin{theorem}\label{thm: }
Let $\X$ be a uniformly convex Banach space whose modulus of convexity has a power type.
Let $d_{w}$ be as above. We assume that $w$ is H\"older continuous on compact sets.
Then every $d_{w}$-geodesic $\gamma$ is $C^1$ excluding the endpoints.
\end{theorem}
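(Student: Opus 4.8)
The plan is to fix a $d_w$-geodesic $\gamma\colon[a,b]\to\Omega$, parametrized by arclength with respect to the norm (this is possible since a geodesic is rectifiable), and to show that $\gamma'$ exists everywhere in $(a,b)$ and varies continuously. Since $\X$ has the RNP (being uniformly convex, hence reflexive), $\gamma$ is differentiable a.e. with $\gamma(t)=\gamma(a)+\int_a^t\gamma'(s)\,ds$ and $\|\gamma'(s)\|=1$ a.e. The crux is to upgrade ``a.e.'' to ``everywhere'' and to get continuity, and the mechanism will be the uniform convexity of $\X$: if the unit-speed tangent directions failed to converge at some point $t_0$, one could average two nearly-tangent subarcs on either side of $t_0$ and, using the modulus of convexity together with the H\"older control on $w$, build a strictly shorter competitor, contradicting geodesy.

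First I would localize. Near an interior point $t_0$, the weight $w$ is bounded above and below and H\"older continuous, say $|w(u)-w(v)|\le C\|u-v\|^\alpha$ on a neighborhood; so on a short subinterval $[t_0-h,t_0+h]$ the $w$-length of any path is comparable to its norm-length up to a multiplicative error of order $h^\alpha$. Concretely, for a subarc of norm-diameter $O(h)$ contained in this neighborhood, $\ell_w$ equals $w(\gamma(t_0))\cdot\ell_{\|\cdot\|}$ plus an error $O(h^{1+\alpha})$. Thus, to leading order, a $d_w$-geodesic must be an ``almost-geodesic'' for the norm on small scales — but a genuine norm-geodesic between two points is just the segment, and more importantly the segment is the \emph{unique} shortest path only when $\X$ is strictly convex, which it is. The key quantitative input is that in a uniformly convex space, a unit-speed path joining $p$ to $q$ whose norm-length exceeds $\|p-q\|$ by at most $\eta$ must have its chord direction $\frac{q-p}{\|q-p\|}$ close (in a modulus-of-convexity-controlled way) to ``most'' of its tangent directions; quantitatively, averaging the first and second halves of such a subarc and invoking $\delta_\X$ forces $\|\gamma'(s)-\text{(chord direction)}\|$ to be small on a large-measure set, with a bound like $\delta_\X^{-1}(O(\eta))$.

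The main argument then runs as follows. Let $v(t)$ denote, for $t$ a differentiability point, the unit tangent $\gamma'(t)$. For $h>0$ small set $c_h(t_0)=\frac{\gamma(t_0+h)-\gamma(t_0-h)}{\|\gamma(t_0+h)-\gamma(t_0-h)\|}$, the chord direction across $t_0$. Geodesy of $\gamma$ for $d_w$, combined with the localization estimate above, shows that the subarc on $[t_0-h,t_0+h]$ is a norm-almost-geodesic with defect $\eta(h)=O(h^{1+\alpha})$; dividing by the scale $h$, the \emph{normalized} defect is $O(h^\alpha)\to0$. By the uniform-convexity estimate, this forces $\frac1{2h}\int_{t_0-h}^{t_0+h}\|v(s)-c_h(t_0)\|\,ds\le \delta_\X^{-1}(O(h^\alpha))\to0$, and when $\delta_\X$ is of power type $p$ this gives a concrete rate $O(h^{\alpha/p})$. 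From this one deduces first that $c_h(t_0)$ is Cauchy as $h\to0$, hence converges to a limit $v(t_0)$; then that $\gamma$ is differentiable at $t_0$ with derivative $v(t_0)$ (the chord converges, and the normalized $L^1$ control on $v$ near $t_0$ pins the one-sided difference quotients to the same limit); and finally that $t_0\mapsto v(t_0)$ is continuous, indeed H\"older, because the estimate is uniform in $t_0$ over compact subintervals and comparing $c_h(t_0)$ with $c_h(t_1)$ for $t_0,t_1$ close and $h$ comparable to $|t_0-t_1|$ yields $\|v(t_0)-v(t_1)\|=O(|t_0-t_1|^{\alpha/p})$.

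The step I expect to be the main obstacle is the quantitative uniform-convexity lemma: showing that a unit-speed path which is a near-geodesic for the norm has tangent direction close to its chord direction on a large set, with an explicit modulus coming from $\delta_\X$. The natural route is an averaging/midpoint argument — write $\gamma(t_0+h)-\gamma(t_0-h)=\int v$, split the integral into the two halves $\int_{t_0-h}^{t_0}v$ and $\int_{t_0}^{t_0+h}v$, note each half has norm at most $h$ with equality essentially forcing $v$ to be nearly constant on that half (again by uniform convexity, since the integral of unit vectors attains norm $h$ only if they nearly align), and then apply $\delta_\X$ to the two half-averages to conclude they nearly coincide with each other and with $c_h$. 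One must handle carefully that ``$\ell\approx\|p-q\|$'' is the hypothesis, not ``$\|\int v\|\approx h$'' — but these are equivalent up to the same defect $\eta$. A secondary technical point is making the localization rigorous when $\gamma$ may briefly leave the chosen neighborhood; this is dispatched by first using continuity of $\gamma$ to shrink $h$ so the whole subarc stays in a set where $w$ is comparable to $w(\gamma(t_0))$ and H\"older-controlled, which is legitimate precisely because $w$ is H\"older continuous on compact sets and $\overline{D(\gamma(t_0),\epsilon)}$ is compact for small $\epsilon$ (using reflexivity and that quasihyperbolic balls in such domains are well behaved).
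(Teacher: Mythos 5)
Your overall architecture is the same as the paper's: parametrize by norm length, use the chord over $[t-h,t+h]$ as a competitor, convert geodesy plus the H\"older modulus of $w$ into a norm-length defect of order $h^{1+\alpha}$ (normalized $h^{\alpha}$), and let power-type uniform convexity force the tangent field to cluster around the chord direction at rate $h^{\alpha/p}$, yielding a (H\"older) continuous derivative; this is precisely the scheme in the paper, where $T_h(t)=\frac{1}{2h}\int_{t-h}^{t+h}\gamma'(s)\,ds$ is shown to satisfy $\frac{1}{2h}\int_{t-h}^{t+h}\|\gamma'(s)-T_h(t)\|\,ds\le\beta(h)=c_2h^{r/p}$ and the maps $T_{h/2^n}$ are shown to be uniformly Cauchy. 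But the step you yourself flag as the main obstacle is not actually proved, and your sketch of it is circular: you assert that each half of $\int\gamma'$ having nearly maximal norm "forces $\gamma'$ to be nearly constant on that half, by uniform convexity, since the integral of unit vectors attains norm $h$ only if they nearly align" --- that assertion \emph{is} the lemma, merely at half the scale. Applying $\delta_{\X}$ to the two half-averages controls only the averages, not the oscillation of $\gamma'$ about them, and iterating the halving does not obviously close the gap, since the defect available at each finer dyadic level is a priori only bounded by the top-level one, so the per-level contributions $\delta_{\X}^{-1}(\eta)$ do not sum. The paper closes exactly this point with two probabilistic lemmas: for i.i.d. unit-norm $X_1,X_2$ one has $\|\E X_1\|\le 1-\delta(\E\|X_1-X_2\|)$ (Lemma \ref{lm: Ed}) and $\E\|X_1-\E X_1\|\le\E\|X_1-X_2\|$ (Lemma \ref{lm: Jensen}), applied to $\gamma'$ at two independent uniform times in $[t-h,t+h]$; this gives the $L^1$ bound in one stroke. (Your halving idea can be salvaged, but only by re-invoking the geodesic-plus-H\"older defect bound at \emph{every} dyadic scale for the one-sided chords, so that successive chord directions differ by $O((h/2^k)^{\alpha/p})$ and telescope geometrically --- a chord-convergence argument rather than the $L^1$ oscillation lemma you state.)

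A second genuine flaw is the localization: in an infinite-dimensional space $\overline{D(\gamma(t_0),\epsilon)}$ is never norm-compact (reflexivity gives only weak compactness), so the hypothesis that $w$ is H\"older continuous on compact sets cannot be applied to quasihyperbolic balls, and in any case you need the H\"older constant and the lower bound $w_0=\min w>0$ to be uniform in $t_0$ for your continuity/H\"older claim about $\gamma'$. The paper's remedy is to exploit that the image of $\gamma$ is compact and to form the compact set $C$ of all points $p\gamma(s)+(1-p)\gamma(t)$ with $|s-t|\le r_0$, which contains every short subarc together with its competitor chord, stays at distance at least $d/2$ from $\partial\Omega$ (so the chords are admissible paths), and carries a single H\"older modulus and a positive minimum of $w$. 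Some construction of this kind is needed to make your "neighborhood where $w$ is comparable and H\"older-controlled" both rigorous and uniform.
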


It turns out during the course of the proof that it suffices only to consider suitable compact sets 
containing the path. Thus we have the following corollary.

\begin{corollary}
Let $\X$ be a Banach space as above. Then each quasihyperbolic geodesic $\gamma\subset \Omega\subset \X$ is $C^1$-smooth.
\end{corollary}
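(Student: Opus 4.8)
The plan is to deduce the corollary directly from Theorem~\ref{thm: } by checking that the quasihyperbolic weight $w(x)=1/\dist(x,\partial\Omega)$ meets its hypotheses. The space $\X$ is already assumed uniformly convex with a modulus of convexity of power type, so the only thing that needs verification is the H\"older continuity of $w$ on compact sets. First I would recall the elementary fact that $x\mapsto\dist(x,\partial\Omega)$ is $1$-Lipschitz on $\X$, since $\abs{\dist(x,\partial\Omega)-\dist(y,\partial\Omega)}\le\|x-y\|$ by the triangle inequality. Given a compact set $K\subset\Omega$, the positive continuous function $\dist(\cdot,\partial\Omega)$ attains a positive minimum $c:=\dist(K,\partial\Omega)>0$ on $K$, and therefore for $x,y\in K$
\[
\abs{w(x)-w(y)}=\frac{\abs{\dist(y,\partial\Omega)-\dist(x,\partial\Omega)}}{\dist(x,\partial\Omega)\,\dist(y,\partial\Omega)}\le\frac{\|x-y\|}{c^{2}}.
\]
Hence $w$ is in fact Lipschitz, and in particular H\"older continuous, on every compact subset of $\Omega$.

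Next I would note that a quasihyperbolic geodesic $\gamma\colon[0,1]\to\Omega$ is by definition a $d_{w}$-geodesic for this $w$, and that its image $\gamma([0,1])$ is compact, being the continuous image of $[0,1]$. As observed in the remark preceding the corollary, the proof of Theorem~\ref{thm: } is local in nature and only uses the values of $w$ on a suitable compact set containing the path; the H\"older-on-compacta property established above is exactly what that argument requires. Invoking Theorem~\ref{thm: } then gives that $\gamma$ is $C^{1}$ away from its endpoints, which, by the definition of $C^1$-smoothness recorded in the Preliminaries, is precisely the assertion of the corollary.

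The point worth stressing — and the reason the corollary is not a completely vacuous restatement — is that the quasihyperbolic weight is \emph{not} H\"older continuous on $\Omega$ as a whole, nor on arbitrary bounded subsets of $\Omega$, because $w$ blows up near $\partial\Omega$; so one cannot apply a version of the smoothness theorem demanding global H\"older control. The content here is the reduction to a compact neighbourhood of the (necessarily compact) geodesic, where $w$ is as regular as one wishes. I do not expect any genuine obstacle: the only care needed is bookkeeping, namely to confirm that the estimates in the proof of Theorem~\ref{thm: } governing the modulus of convexity and the arc-length parametrisation — the bounds used to produce $\gamma'$ and to show it is continuous — enter only through the values of $w$ on such a compact set, together with the local Lipschitz bound $c^{-2}$ recorded above.
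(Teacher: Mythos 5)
Your proposal is correct and follows essentially the same route as the paper, which deduces the corollary from Theorem~\ref{thm: } by noting that its proof only uses the weight on a compact set containing the geodesic. Your explicit verification that $w(x)=1/\dist(x,\partial\Omega)$ is Lipschitz, hence H\"older, on compact subsets of $\Omega$ is exactly the routine check the paper leaves implicit.
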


The argument for the general Banach space here is necessarily different compared to an inner 
product space setting because here there are no angles, etc.

We will first give some auxiliary facts before the proof of the theorem. In what follows $X_1$ and $X_2$ are i.i.d. $\X$-valued random variables with $\|X_1 \|=1$ a.s. In particular, these random variables have (finite) expectations.

\begin{lemma}[Jensen type inequality]\label{lm: Jensen}
Suppose that $X_1$ and $X_2$ are i.i.d. $\X$-valued random variables with $\|X_1 \|=1$ a.s.,
and let $\phi\colon \X\to \R$ be a convex function. Then
\[\E\phi(X_1 -\E X_1)\leq \E\phi(X_1 - X_2).\]
\end{lemma}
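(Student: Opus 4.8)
The plan is to deduce this from the ordinary (scalar) Jensen inequality applied in a clever way, exploiting that $X_1$ and $X_2$ are i.i.d. First I would set $Y = X_1 - X_2$ and observe that $\E[Y \mid X_1] = X_1 - \E X_2 = X_1 - \E X_1$, since $X_2$ is independent of $X_1$ and $\E X_2 = \E X_1$. Thus the quantity on the left, $\phi(X_1 - \E X_1)$, is exactly $\phi(\E[Y \mid X_1])$.

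Next I would invoke the conditional Jensen inequality for the convex function $\phi$: for the $\X$-valued integrable random variable $Y$ and the sub-$\sigma$-algebra $\sigma(X_1)$, one has
\[
\phi\big(\E[Y \mid X_1]\big) \le \E\big[\phi(Y) \mid X_1\big]
\]
almost surely. This is the vector-valued conditional Jensen inequality, valid because $\phi\colon \X \to \R$ is convex (and, being convex and finite on all of $\X$, automatically continuous, so the relevant measurability and integrability bookkeeping goes through; one may also reduce to the separable subspace generated by the $X_i$ if one wishes to be careful about Bochner integrability). Taking expectations of both sides and using the tower property $\E\,\E[\phi(Y)\mid X_1] = \E\phi(Y)$ yields
\[
\E\,\phi(X_1 - \E X_1) = \E\,\phi\big(\E[Y \mid X_1]\big) \le \E\,\phi(Y) = \E\,\phi(X_1 - X_2),
\]
which is the claim.

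The only genuine subtlety — and the step I would flag as the main obstacle, though it is more a matter of care than of difficulty — is justifying conditional Jensen in the Banach-valued setting: one needs $\phi(Y)$ to be integrable (or at least quasi-integrable so that $\E\phi(Y) \in (-\infty,+\infty]$ makes sense, in which case the inequality still holds with the convention that it is trivial when the right side is $+\infty$), and one needs the conditional expectation $\E[Y\mid X_1]$ to exist as a Bochner-integrable random variable. Both are fine here: $\|X_1\| = \|X_2\| = 1$ a.s. forces $\|Y\| \le 2$ a.s., so $Y$ is bounded; a bounded random variable taking values in the (separable, after passing to the closed span of the range) space $\X$ has a well-defined conditional expectation; and $\phi$ restricted to the ball of radius $2$ is bounded, hence $\phi(Y)$ is bounded and integrable. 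With these points in place the argument above is complete, and it is exactly the vector-valued analogue of the standard proof that $\Var$ is minimized at the mean.
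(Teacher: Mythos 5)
Your proof is correct and is essentially the paper's own argument in different notation: the paper writes $\P=\P_1\otimes\P_2$, fixes $X_1$, and applies Jensen to the inner integral over $\P_2$ using $\E(X_2-\E X_2)=0$, which is precisely your conditional Jensen step with respect to $\sigma(X_1)$ followed by the tower property. The integrability issues you flag are harmless here, as you note, since $\|X_1-X_2\|\le 2$ a.s. and a convex $\phi$ is bounded on bounded sets of the relevant separable subspace.
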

\begin{proof}
Since $X_1$ and $X_2$ are independent, we may consider $\P=\P_1 \otimes \P_2$, where $\P_1$ and $\P_2$ 
support the corresponding random variables.
Observe that
\begin{eqnarray*}
\E\phi(X_1 - X_2) &=&\E\phi(X_1 - \E X_1 + \E X_1 - X_2)\\
&=& \int\left(\int \phi(X_1 -\E X_1 + \E X_2 - X_2)d\P_{2}\right)d\P_1\\
&\geq& \int \phi(X_1 -\E X_1) d\P_1= \E\phi(X_1 -\E X_1),
\end{eqnarray*}
where the inequality follows from Jensen's inequality, since $\E(X_2 - \E X_2)=0$.
\end{proof}

\begin{lemma}\label{lm: Ed}
Suppose that $X_1$ and $X_2$ are i.i.d. $\X$-valued random variables with $\|X_1 \|=1$ a.s.
Then
\[\| \E X_1 \| \leq 1-\delta (\E \|X_1 - X_2 \|).\]
\end{lemma}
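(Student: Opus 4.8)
The plan is to reduce the statement to an application of the Jensen-type inequality (Lemma \ref{lm: Jensen}) combined with the definition of the convex minorant $\delta$ of the modulus of convexity. First I would observe that by the very definition of $\delta_\X$, for any two unit vectors $u,v$ one has $\|u+v\|/2 \leq 1-\delta_\X(\|u-v\|)$, i.e. $\|(u+v)/2\| \leq 1-\delta(\|u-v\|)$ after passing to the convex minorant. Phrasing this with $u=X_1$, $v=X_2$ — which is legitimate pointwise since $\|X_1\|=\|X_2\|=1$ a.s. — gives the pointwise bound $\|\tfrac12(X_1+X_2)\| \leq 1-\delta(\|X_1-X_2\|)$. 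Taking expectations and using that $\delta$ is convex so that $\E\,\delta(\|X_1-X_2\|)\geq \delta(\E\|X_1-X_2\|)$ by Jensen's inequality yields $\E\|\tfrac12(X_1+X_2)\| \leq 1-\delta(\E\|X_1-X_2\|)$.

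The remaining point is to bound $\|\E X_1\|$ from above by $\E\|\tfrac12(X_1+X_2)\|$. This is exactly where Lemma \ref{lm: Jensen} enters: apply it with the convex function $\phi(z)=\|z\|$ (or, to line things up with the symmetric average, with a suitable convex function and a centering argument). Concretely, since $X_1,X_2$ are i.i.d., $\E X_1=\E X_2=\E\big(\tfrac12(X_1+X_2)\big)$, and then $\|\E X_1\| = \big\|\E\big(\tfrac12(X_1+X_2)\big)\big\| \leq \E\big\|\tfrac12(X_1+X_2)\big\|$ by the ordinary (vector-valued) Jensen inequality for the convex functional $\|\cdot\|$. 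Chaining this with the estimate from the previous paragraph gives $\|\E X_1\| \leq 1-\delta(\E\|X_1-X_2\|)$, which is the claim. Alternatively, one can invoke Lemma \ref{lm: Jensen} directly with $\phi=\|\cdot\|$ and the substitution $X_i \mapsto -X_i$ to get $\E\| -\E X_1 + X_1\| \le \E\|X_1 - X_2\|$, but the cleaner route is through the symmetric average as above, since that is what connects naturally to the modulus of convexity.

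The only genuinely delicate step is making sure the pointwise inequality $\|(X_1(\omega)+X_2(\omega))/2\|\leq 1-\delta(\|X_1(\omega)-X_2(\omega)\|)$ is valid for almost every $\omega$ including the degenerate cases: when $X_1(\omega)=X_2(\omega)$ the right-hand side is $1-\delta(0)=1$ and the left-hand side is $\|X_1(\omega)\|=1$, so equality holds and there is no issue; for $\|X_1(\omega)-X_2(\omega)\|=\epsilon>0$ it is just the definition of $\delta_\X(\epsilon)$ (and hence holds a fortiori for the smaller convex minorant $\delta$). One should also note measurability of $\omega\mapsto \delta(\|X_1(\omega)-X_2(\omega)\|)$, which follows since $\delta$ is continuous (being convex and finite on $(0,2]$) and $\|X_1-X_2\|$ is measurable, and integrability is automatic because $0\leq\delta(\cdot)\leq 1$. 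I expect the main obstacle to be purely bookkeeping: keeping straight which Jensen inequality is applied where (the scalar convexity of $\delta$ on one hand, the vector-valued Jensen / Lemma \ref{lm: Jensen} for $\|\cdot\|$ on the other) and ensuring the centering $\E X_1 = \E\big(\tfrac12(X_1+X_2)\big)$ is used correctly; there is no real analytic difficulty beyond that.
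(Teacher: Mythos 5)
Your proposal is correct and follows essentially the same route as the paper: the pointwise estimate $\|X_1+X_2\|/2 \leq 1-\delta(\|X_1-X_2\|)$ from the definition of the modulus of convexity, the vector-valued Jensen inequality $\|\E X_1\| = \|\E\tfrac12(X_1+X_2)\| \leq \E\|\tfrac12(X_1+X_2)\|$, and scalar Jensen applied to the convex minorant $\delta$. The paper's proof chains these same three steps in a single display (and, like your ``cleaner route,'' does not actually need Lemma \ref{lm: Jensen} here), so your argument matches it up to the order of the steps.
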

\begin{proof}
Observe that
\begin{eqnarray*}
\|\E X_1 \| & \leq & \E \frac{\|X_1 + X_2\|}{2} \leq \E(1-\delta(\|X_1 - X_2\|))\\
&=& 1- \E(\delta(\|X_1 - X_2\|))\leq 1- \delta(\E(\|X_1 - X_2\|)).
\end{eqnarray*}
We applied Jensen's inequality and the convexity of $\delta$ in the last estimate.
\end{proof}

\begin{proof}[Proof of Theorem \ref{thm: }]
Let $\gamma\colon [a,b]\to \Omega$ be a (rectifiable) geodesic parameterized by the norm length. 
We aim to show that the restriction $\gamma|_{(a, b)}$ has a continuous derivative.

Since the image of $\gamma$ is compact, its distance to the boundary $\partial \Omega$ is strictly positive, say $d>0$. Let 
\[T\colon \{(s,t,p)\in [a,b]^2 \times [0,1]\colon s\leq t\}\to \X,\] 
\[T(s,t,p)=p\gamma(s)+(1-p)\gamma(t).\] 
Note that this is a uniformly continuous mapping and that the preimage
\[
T^{-1}(\{x\in \X\colon \dist(x,\gamma([a,b]))<d/2) 
\]
is an open neighborhood of the subset $\{(t,t,p)\in [a,b]^2 \times [0,1]\colon \}$. Since $\gamma$ is parameterized 
by its norm-length, it follows that $T$ is uniformly continuous and therefore there is $r_0 >0$ such that 
\begin{multline}
\{(t,\min(t+r,b),p)\colon t\in [a,b],\ p\in [0,1],\ 0\leq r\leq r_0\}\\
\subset  T^{-1}(\{x\in \X\colon \dist(x,\gamma([a,b]))<d/2).
\end{multline}
Let $C$ be the image of the left hand set under the mapping $T$ (see Figure \ref{fig4}). 
Note that $C$ is a compact set as a continuous image of one.

\begin{figure}

\includegraphics[width=6cm]{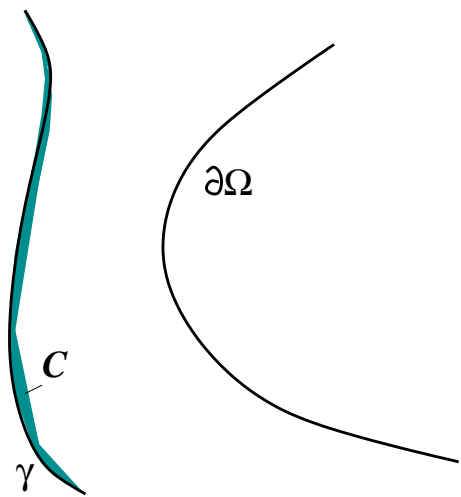}

\caption{The considerations reduce to a compact set of small-width averages of $\gamma$.}\label{fig4}
\end{figure}

Since $\X$ has the RNP, being reflexive, we can recover $\gamma$ by Bochner integrating its 
(vector-valued) derivative $\gamma'$. By the parameterization of $\gamma$ we have that $\|\gamma'\|=1$ a.e.
Define $T\colon [a,b]\times [0,1]\to \X$ by
\[T_{h}(t)=\frac{\int_{[a,b]\cap [t-h,t+h]}\gamma'(s)\ ds}{m([a,b]\cap [t-h,t+h])}\quad \mathrm{for}\ t\in [a,b],\ 0<h<1.\]
Note that 
\[T_{h}(t)=\frac{1}{2h} \int_{[t-h,t+h]}\gamma'\ ds\] 
if $[t-h,t+h]\subset [a,b]$ which is the essential case here.
Also note that 
\[T(t-h,t+h,1/2)-\gamma(t-h)=\frac{1}{2}\int_{t-h}^{t+h}\gamma'\ ds=hT_{h}(t).\]
By the Lebesgue differentiation theorem $\lim_{h\to 0^{+}}T_{h}(t)=\gamma'(t)$ for a.e. $t$. Thus, without loss
of generality may assume by redefining $\gamma'$ that these coincide everywhere and we write $T_{0}\equiv\gamma'$.

By a standard argument involving approximation of $\gamma'$ by simple functions we observe that for a fixed $h>0$ 
the map $[a,b]\to \X,\ t\mapsto T_{h}(t)$ is continuous. 

Therefore it suffices to check that 
\[\lim_{h\to 0^+}\sup_{t\in [a_1,b_1]}\|T_{h}(t)-\gamma'(t)\|=0\] 
for any closed interval $[a_1,b_1]\subset (a,b)$ because then $\gamma'$ will be continuous 
on $[a_1,b_1]$ and the statement of the theorem follows. To this end, we are actually required to verify that 
$T_{h/2^n}$, considered as mappings $[a_1,b_1]\to \X$, form a Cauchy sequence in $C([a_1,b_1],\X)$. 
It suffices to establish an estimate
\begin{equation}\label{eq: (ast)}   
\frac{1}{2h}\int_{t-h}^{t+h}\|T_{h}(t)-\gamma'(s)\|\ ds \leq \beta(h)
\end{equation}
where $\beta(h)$ tends to $0$ suitably rapidly as $h\to 0$ (not depending on $t\in [a_1,b_1]$). 
Indeed, we will check that $T_{h/2^n}$ is $C([a_1,b_1],\X)$-Cauchy. Note that 
\begin{multline*}
\|T_{h}(t)-T_{h/2}(t)\|=\|\frac{1}{2h}\int_{t-h}^{t+h}\gamma'(s)\ ds - \frac{1}{h}\int_{t-h/2}^{t+h/2}\gamma'(s)\ ds\|\\
 =\|\frac{1}{2h}\int_{t-h\leq s \leq 1-h/2\vee t+h/2\leq s\leq t+h}\gamma'(s)\ ds - \frac{1}{2h}\int_{t-h/2}^{t+h/2}\gamma'(s)\ ds\|\\
 =\frac{1}{2h}\|\int_{t-h\leq s \leq 1-h/2\vee t+h/2\leq s\leq t+h}\gamma'(s) -T_h(t)\ ds - \int_{t-h/2}^{t+h/2}\gamma'(s)-T_h(t)\ ds\|\\
\leq  \frac{1}{2h}\int_{t-h}^{t+h}\|\gamma'(s)-T_h (t)\|\ ds\leq \beta(h).
\end{multline*}  
Now $(T_{h/2^n})$ is Cauchy, since 
\begin{equation}\label{eq: suminfty}
\|T_{h}-T_{h/2^n}\|\leq \sum_{i=1}^{n} \|T_{h/2^{i-1}}-T_{h/2^i}\|\leq \sum_{i=0}^{\infty} \beta(h/2^i)<\infty
\end{equation}
where the convergence of the right hand sum will be established subsequently.

\begin{figure}

\includegraphics[width=8cm]{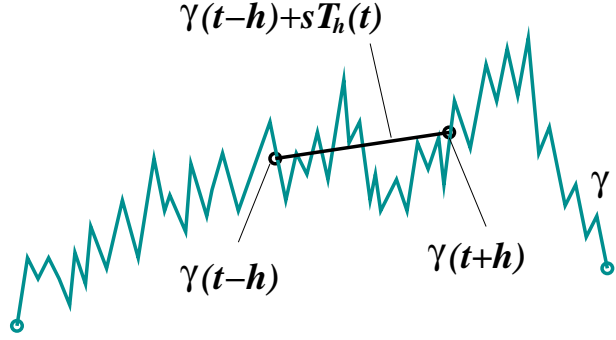}

\caption{An approximation of a path $\gamma$ by using the moving average $T_h$ of $\gamma'$.}\label{fig5}
\end{figure}

Put $h_{0}=r_0 /2$ and observe that (see Figure \ref{fig5})
\[
\{\gamma(t-h)+s T_{h}(t)\colon 0\leq s\leq 2h\}\subset C
\]
for $0\leq h\leq h_{0}$, $[t-h,t+h]\subset [a,b]$. Note that $w|_{C}$ is H\"older continuous and let $\nu(\epsilon)$ be the modulus of continuity of $w|_{C}$. Thus $\nu(\epsilon)\leq c_1 \epsilon^r$ for some $r>0$ and $\nu(\epsilon) \to 0$ as $\epsilon\to 0$.

Since $\gamma$ is a $d_{w}$-geodesic we have that 
\begin{equation}
\begin{split}
(w(\gamma(t))-\nu(h))\int_{t-h}^{t+h}\|d\gamma\|&\leq (w(\gamma(t-h))+\nu(2h))2h\|T_{h}(t)\|\\
&\leq  (w(\gamma(t))+\nu(3h))2h\|T_h (t)\|.\\
\end{split}
\end{equation}
The above estimate uses the facts that $\|\gamma(t)-\gamma(s)\|\leq h$ for $|t-s|\leq h$ 
and $|\gamma(t-h)-(\gamma(t-h)+sT_{h}(t))|\leq 2h$ for $0\leq s \leq 2h$. Note that $s\mapsto \gamma(t-h)+sT_{h}(t)$ 
defines a straight line $[\gamma(t-h),\gamma(t+h)]$ and $2h\|T_{h}(t)\|$ is the length of the line. 
It follows that 
\begin{equation}
\begin{split}
\frac{1}{2h}\int_{t-h}^{t+h}\|d\gamma\| -\|T_h (t)\|&\leq  \left(\frac{w(\gamma(t))+\nu(3h)}{w(\gamma(t))-\nu(h)}-1\right)\\
&\leq  \frac{2\nu(3h)}{w_0}\\
\end{split}
\end{equation}
for a suitable $w_0 >0$ and sufficiently small $h>0$. Note that since $C$ is compact the weight $w$ attains 
its minimum on $C$ which must be greater than $0$. In what follows we will consider only $h$ such that
the last inequality above holds. For convenience we will abbreviate the last term of the above inequality by $\mu(h)=2\nu(3h)/w_0$.

Note that
\begin{multline}\label{eq: searrow}
%\begin{split} 
%&&
\frac{1}{2h}\int_{t-h}^{t+h}\|d\gamma\|-\|T_{h}(t)\|\\ 
=\frac{1}{2h}\int_{t-h}^{t+h}\|d\gamma\| -\frac{1}{2h}\|\gamma(t+h)-\gamma(t-h)\| \leq \mu(h)
%\end{split}
\end{multline}
where $\mu(h)$ does not depend on $t$ and $\mu(h) \searrow 0$ as $h\to 0$.
 
Therefore on $[t-h,t+h]$ the path $\gamma$ has '$\mu$-asymptotically' the same length as its linear approximation $s\mapsto \gamma(t-h)+sT_{h}(t)$, $s\in [0,2h]$. This is controlled by $\nu(h)$.
The philosophy is that if the space is uniformly convex then $\gamma$ is heavily penalized for squiggling around its linear approximation. We will justify \eqref{eq: (ast)} by using \eqref{eq: searrow} and the modulus of uniform convexity.

To finish the argument, let us consider the state space $[t-h,t+h]^2$ with the probability measure 
\[m(A)=\frac{1}{2h}\int_{t-h}^{t+h}\frac{1}{2h}\int_{t-h}^{t+h}1_{A}(s,r)\ dr\ ds.\]
Put $X_1(s,r)=\gamma'(s)$ and $X_2(s,r)=\gamma'(r)$ and of course we implicitly assume these random variables
depend on $t$ and $h$, although the estimates depend on $h$ only. According to \eqref{eq: searrow} we obtain that 
$\|\E X_1\|\geq 1-\mu(h)$ so that $\delta_{\X}(\E\|X_1 - X_2\|)\leq \mu(h)$ according to Lemma \ref{lm: Ed}. 
Here we used the lemma with $\delta (\epsilon) = c \epsilon^p$, which is convex since we may take $p\geq 2$. 
Thus $c(\E\|X_1 - X_2\|)^p \leq \mu(h)$ so that by applying Lemma \ref{lm: Jensen} we get
\[\E\|X_1 - \E X_1\| \leq \E\|X_1 - X_2\| \leq \frac{1}{c}(\mu(h))^{\frac{1}{p}} \leq c_2 h^{\frac{r}{p}} = \beta(h),\]
the last equality being a definition. Observe that 
\[\sum_{i=1}^{\infty}\beta(h/2^i)=\sum_{i=1}^{\infty}c_2 (h/2^i)^{\frac{r}{p}}\] 
converges as a geometric series with ratio $1/2^{\frac{r}{p}}$. We note that 
\[\E\|X_1 - \E X_1\|=\frac{1}{2h}\int_{t-h}^{t+h} \|\gamma'(s) - T_h (t)\|\ ds,\]
so that we have verified \eqref{eq: (ast)} and \eqref{eq: suminfty}.
\end{proof}

It was recently brought to our attention that J. V\"ais\"al\"a has independently arrived at a result similar 
the one above by a different argument \cite{Vaisala12}.

We call a curve $\gamma\colon \R \to \Omega$ a \emph{local geodesic} if for each $a,b\in\R,\ a<b,$ the restriction $\gamma|_{[a,b]}$ is a geodesic between $\gamma(a)$ and $\gamma(b)$. The above result extends naturally to the local geodesics as well.

\subsection*{Acknowledgments} 
We thank J.~V\"ais\"al\"a and M.~Vuorinen for their helpful comments on this paper.

\end{document}